\documentclass{amsart}
\usepackage{amssymb}
\usepackage[colorlinks=true,linkcolor=blue,citecolor=blue,urlcolor=blue]{hyperref}

\newtheorem{theorem}{Theorem}[section]
\newtheorem{lemma}[theorem]{Lemma}
\newtheorem{proposition}[theorem]{Proposition}

\newtheorem{corollary}[theorem]{Corollary}
\newtheorem{definition}[theorem]{Definition}

\numberwithin{equation}{section}
\allowdisplaybreaks[4]

\begin{document}

\title[$B_2^{(1)}$ character recurrences and formulas]
{Character recurrences and level one formulas for $B_2^{(1)}$}

\author{Miroslav Jerkovi\'c}

\address{University of Zagreb Faculty of Chemical Engineering and Technology, Maruli\'cev trg 19, HR-10000 Zagreb, Croatia}
\email{mjerkov@fkit.unizg.hr}

\subjclass[2020]{Primary 17B67; Secondary 17B69, 05A19}
\keywords{affine Lie algebras; Feigin--Stoyanovsky type subspaces; recurrence relations; formal characters}
\date{}

\begin{abstract}
    We obtain recurrence relations for formal characters of Feigin--Stoyanovsky
    type subspaces of standard $B_2^{(1)}$-modules at fixed level. The recurrences
    are derived directly from the combinatorial bases described by difference and
    initial conditions. Unlike in type $A_\ell^{(1)}$, the right side in general
    involves several shifted characters. We give explicit polynomial coefficients
    and prove that the resulting system, together with the constant term
    normalization, determines all formal characters at fixed level uniquely. At
    level $1$ we solve the recurrence system explicitly and obtain fermionic
    formulas for all three characters. We also relate these characters to the
    multigraded generating functions for exponent triples parametrizing the
    Meurman--Primc monomial basis of standard $A_1^{(1)}$-modules.
\end{abstract}

\maketitle
\enlargethispage{2pt}

\section{Introduction}\label{S:intro}

The study of combinatorial bases of standard modules for affine Lie algebras
originates in the work of Lepowsky and Wilson on Rogers--Ramanujan type
identities (cf.\ \cite{LW1,LW2}). By using Lepowsky--Wilson's approach with
untwisted vertex operators, Lepowsky and Primc obtained in \cite{LP} a
combinatorial basis of standard $A_1^{(1)}$-modules in the homogeneous picture.

Later, Feigin and Stoyanovsky introduced a class of distinguished subspaces of
standard modules \cite{FS}. Inspired by this construction, Feigin--Stoyanovsky
type subspaces became a useful setting for constructing combinatorial bases
related to standard modules of affine Lie algebras (cf.\ \cite{T,P,BPT}). In
many cases, these subspaces admit combinatorial bases described in terms of
colored partitions satisfying certain difference and initial conditions. Such
bases were obtained for affine Lie algebras of type $A_\ell^{(1)}$ in \cite{T},
for type $C_\ell^{(1)}$ in \cite{BPT}, and for type $B_2^{(1)}$ in \cite{P}.

Formal characters of Feigin--Stoyanovsky type subspaces are generating
functions of the colored partitions that parametrize the bases. Relations among
such generating functions at fixed level give recurrence relations for the
characters. For affine Lie algebras of type $A_\ell^{(1)}$, a system of such
recurrences was obtained in \cite{J} from exact sequences constructed by means
of intertwining operators and a simple current operator.

In this paper we derive analogous recurrence relations at fixed level for the
affine Lie algebra $B_2^{(1)}$ directly from the combinatorial basis of
\cite{P}. In contrast with the $A_\ell^{(1)}$ case, the right side of the
recurrence in Theorem~\ref{T:main} in general involves several shifted
characters. This reflects the fact that, after lowering the relevant initial
conditions, several degree $-1$ triples have to be removed. Together with the
relation for the case $k_0+k_2=0$ and the constant term normalization, these
recurrences determine all formal characters at fixed level uniquely.

At level $1$, the recurrence system can be solved explicitly.
Theorem~\ref{T:level-one-character-formulas} gives fermionic formulas for the
three characters, written in terms of a single basic series and its shifts.
Finally, we compare the inequalities for the $B_2^{(1)}$ basis of \cite{P} with
those for the exponent triples parametrizing the Meurman--Primc monomial basis
of standard $A_1^{(1)}$-modules \cite{MP}. This gives a decomposition of the
multigraded generating function of these exponent triples into formal
characters.

The paper is organized as follows. Section~\ref{S:setting} fixes the notation,
recalls the basis theorem of \cite{P}, and introduces the formal characters.
Section~\ref{S:boundary} describes the difference $\Pi_K\setminus\Pi_{K^{-}}$
and establishes the deletion bijection. Section~\ref{S:recurrence} derives the
recurrence relations and proves uniqueness of the resulting system.
Section~\ref{S:level-one} solves the level $1$ system explicitly.
Section~\ref{S:meurman-primc} relates the exponent triples parametrizing the
Meurman--Primc monomial basis of standard $A_1^{(1)}$-modules \cite{MP} to the
formal characters introduced in Section~\ref{S:setting}. It then uses the level
$1$ formulas of Section~\ref{S:level-one} to write the level $1$ Meurman--Primc
generating functions explicitly.

The author thanks Mirko Primc for his valuable suggestions.

\section{Preliminaries}\label{S:setting}

\subsection{Affine Lie algebra and standard modules}

Let $\mathfrak g$ be a simple Lie algebra of type $B_2$ and let $\mathfrak h$
be a Cartan subalgebra of $\mathfrak g$. We realize the root system in $\mathbb
    R^2$, with canonical basis $\varepsilon_1,\varepsilon_2$, as
$R=\{\pm(\varepsilon_1-\varepsilon_2),\pm(\varepsilon_1+\varepsilon_2)\}
    \cup\{\pm\varepsilon_1,\pm\varepsilon_2\}$. We fix simple roots
$\alpha_1=\varepsilon_1-\varepsilon_2$ and $\alpha_2=\varepsilon_2$. The
maximal root is $\theta=\varepsilon_1+\varepsilon_2$.

For the coweight $\omega=\omega_1=\varepsilon_1$ we have $\Gamma=\{\alpha\in
    R\mid \alpha(\omega)=1\}
    =\{\varepsilon_1-\varepsilon_2,\varepsilon_1,\varepsilon_1+\varepsilon_2\}$.
This gives the $\mathbb Z$-grading $\mathfrak g=\mathfrak g_{-1}\oplus\mathfrak
    g_0\oplus\mathfrak g_1$, where
\[
    \mathfrak g_{\pm 1}
    =
    \sum_{\alpha\in\pm\Gamma}\mathfrak g_\alpha,
    \qquad
    \mathfrak g_0
    =
    \mathfrak h+\sum_{\alpha(\omega)=0}\mathfrak g_\alpha .
\]
We write $\underline{2}=\varepsilon_1-\varepsilon_2$, $0=\varepsilon_1$, and
$2=\varepsilon_1+\varepsilon_2$, so that $\Gamma=\{\underline{2},0,2\}$, and
denote the corresponding root vectors by $x_{\underline{2}},x_0,x_2$. The
elements of $\Gamma$ are the colors. We use the same symbols
$\underline{2},0,2$ for these colors.

Let $\tilde{\mathfrak g}=\mathfrak g\otimes\mathbb C[t,t^{-1}] \oplus\mathbb C
    c\oplus\mathbb C d$ be the affine Lie algebra of type $B_2^{(1)}$, and write
$x(n)=x\otimes t^n$ for $x\in\mathfrak g$ and $n\in\mathbb Z$. The grading on
$\mathfrak g$ induces $\tilde{\mathfrak g}=\tilde{\mathfrak g}_{-1}\oplus
    \tilde{\mathfrak g}_0\oplus\tilde{\mathfrak g}_1$, where $\tilde{\mathfrak
        g}_1=\mathfrak g_1\otimes\mathbb C[t,t^{-1}]$ is a commutative Lie subalgebra
spanned by $x_{\underline{2}}(n),x_0(n),x_2(n)$, $n\in\mathbb Z$.

Let $L(\Lambda)$ be a standard $\tilde{\mathfrak g}$-module with dominant
integral highest weight $\Lambda=k_0\Lambda_0+k_1\Lambda_1+k_2\Lambda_2$ and
level $k=\Lambda(c)=k_0+k_1+k_2$, where $k_0,k_1,k_2\in\mathbb Z_{\geq 0}$. We
fix a highest weight vector $v_\Lambda\in L(\Lambda)$. We use $K=(k_0,k_1,k_2)$
to record the coefficients of $\Lambda$.

\subsection{Feigin--Stoyanovsky type subspaces and combinatorial bases}

For an integral dominant weight $\Lambda$, define the Feigin--Stoyanovsky type
subspace of $L(\Lambda)$ by $W(\Lambda)=U(\tilde{\mathfrak
        g}_1)v_\Lambda\subset L(\Lambda)$.

Let $\pi\colon\{x_\gamma(-j)\mid \gamma\in\Gamma,\ j\geq 1\}\to\mathbb Z_{\geq
            0}$ be a colored partition in these colors with finite support, and let
$x(\pi)=\prod x_\gamma(-j)^{\pi(x_\gamma(-j))}$ be the corresponding monomial
in $U(\tilde{\mathfrak g}_1)=S(\tilde{\mathfrak g}_1)$. We identify $\pi$ with
the sequence $a_1,b_1,c_1,a_2,b_2,c_2,\dots$, where $a_j=\pi(x_2(-j))$,
$b_j=\pi(x_0(-j))$, and $c_j=\pi(x_{\underline{2}}(-j))$. Then
\[
    x(\pi)=\cdots x_{\underline{2}}(-2)^{c_2}x_0(-2)^{b_2}x_2(-2)^{a_2}
    x_{\underline{2}}(-1)^{c_1}x_0(-1)^{b_1}x_2(-1)^{a_1}.
\]
For $K=(k_0,k_1,k_2)$, let $\Pi_K$ be the set of colored partitions satisfying
the difference conditions
\begin{equation}\tag{DC}\label{DC}
    \begin{aligned}
        c_{j+1}+b_{j+1}+c_j & \leq k, \\
        b_{j+1}+a_{j+1}+c_j & \leq k, \\
        a_{j+1}+c_j+b_j     & \leq k, \\
        a_{j+1}+b_j+a_j     & \leq k
    \end{aligned}
\end{equation}
for all $j\geq 1$, and the initial conditions
\begin{align}
    a_1     & \leq k_0, \tag{IC1}\label{IC1}     \\
    b_1+a_1 & \leq k_0+k_2, \tag{IC2}\label{IC2} \\
    c_1+b_1 & \leq k_0+k_2. \tag{IC3}\label{IC3}
\end{align}
The labels \hyperref[DC]{DC}, \hyperref[IC1]{IC1},
\hyperref[IC2]{IC2}, and \hyperref[IC3]{IC3} denote the difference
conditions and the three initial inequalities. The colored partitions
in $\Pi_K$ are the admissible partitions for $K$.
By Theorem~3.1 of \cite{P}, the vectors
\[
    x(\pi)v_\Lambda,\qquad \pi\in\Pi_K,
\]
form a basis of the level $k$ Feigin--Stoyanovsky type subspace $W(\Lambda)$.
We denote this basis by
\[
    \mathcal B_K=\{x(\pi)v_\Lambda\mid \pi\in\Pi_K\}.
\]
When no confusion can arise, $\mathcal B_K$ is identified with the set $\Pi_K$
of colored partitions parametrizing these basis vectors.

\subsection{Formal characters}

For $K=(k_0,k_1,k_2)$ and $\Lambda=k_0\Lambda_0+k_1\Lambda_1+k_2\Lambda_2$, we
define the formal character of $W(\Lambda)$ by
\[
    \chi_K(z_1,z_2,z_3,q)
    =\sum_{\pi\in\Pi_K}
    z_1^{|\pi|_2}z_2^{|\pi|_0}z_3^{|\pi|_{\underline{2}}}q^{\deg(\pi)},
\]
where
\[
    |\pi|_\gamma=\sum_{j\geq 1}\pi(x_\gamma(-j)),
    \qquad
    \deg(\pi)=\sum_{\gamma\in\Gamma}\sum_{j\geq 1}
    j\,\pi(x_\gamma(-j)).
\] We write $\mathbf z=(z_1,z_2,z_3)$, so that $z_1$, $z_2$, $z_3$ track the
colors $2$, $0$, $\underline{2}$, respectively. The character $\chi_K(\mathbf
    z,q)$ is the multigraded generating function of the colored partitions in
$\Pi_K$.

\section{\texorpdfstring{The set difference $\Pi_K\setminus\Pi_{K^{-}}$}{The set difference PiK minus PiKminus}}\label{S:boundary}

\subsection{Definition of \texorpdfstring{$K^{-}$}{K-minus}}

\begin{definition}\label{D:Kminus}
    For $K=(k_0,k_1,k_2)$ with $k_0+k_2>0$, define
    \[
        K^{-} = \begin{cases}
            (k_0,\,k_1+1,\,k_2-1) & \text{if } k_2>0,          \\
            (k_0-1,\,k_1+1,\,0)   & \text{if } k_2=0,\, k_0>0.
        \end{cases}
    \]
    In both cases $K^{-}$ is again at level $k$, and the initial conditions for
    $K^{-}$ have the same shape as the initial conditions for $K$, with the right
    sides of \hyperref[IC2]{IC2} and \hyperref[IC3]{IC3} reduced from $k_0+k_2$ to
    $k_0+k_2-1$. When $k_2=0$, the right side of \hyperref[IC1]{IC1} is also
    reduced from $k_0$ to $k_0-1$.
\end{definition}

Since the difference conditions are the same for $K$ and $K^{-}$, and the
initial conditions for $K^{-}$ are at least as restrictive as those for $K$, we
have $\Pi_{K^{-}}\subseteq\Pi_K$. Hence the difference of the two formal
characters is the generating function of $\Pi_K\setminus\Pi_{K^{-}}$:
\begin{equation}\label{E:LHS gen fun}
    \chi_K(\mathbf{z},q)-\chi_{K^{-}}(\mathbf{z},q)
    = \sum_{\pi\in\Pi_K\setminus\Pi_{K^{-}}}
    z_1^{|\pi|_2}\,z_2^{|\pi|_0}\,z_3^{|\pi|_{\underline{2}}}\,q^{\deg(\pi)}.
\end{equation}
We partition $\Pi_K\setminus\Pi_{K^{-}}$ by the degree $-1$ triple
$(a_1,b_1,c_1)$ of $\pi$ and analyze each part.

\subsection{The set \texorpdfstring{$\mathcal{S}(K)$}{S(K)}}

\begin{proposition}\label{P:S(K)}
    Assume $k_0+k_2>0$. The set of degree $-1$ triples
    $(a_1,b_1,c_1)$ of partitions in
    $\Pi_K\setminus\Pi_{K^{-}}$ is
    \[
        \mathcal{S}(K) = \left\{(a_1,b_1,c_1)\in\mathbb{Z}_{\geq 0}^3 \,\middle|\,
        \begin{gathered}
            a_1\leq k_0,\,
            b_1+a_1\leq k_0+k_2,\,
            c_1+b_1\leq k_0+k_2, \\
            b_1+a_1=k_0+k_2\ \textup{or}\ c_1+b_1=k_0+k_2
        \end{gathered}
        \right\}.
    \]
\end{proposition}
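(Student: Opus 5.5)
The proof is a double inclusion. The only inputs are the definitions of $\Pi_K$ and $\Pi_{K^{-}}$ and the observation made after Definition~\ref{D:Kminus}: both sets are cut out by the same difference conditions \hyperref[DC]{DC}, and they differ only in the right sides of the initial conditions.

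\emph{Inclusion of the triples in $\mathcal S(K)$.} Let $\pi\in\Pi_K\setminus\Pi_{K^{-}}$ with degree $-1$ triple $(a_1,b_1,c_1)$.
\begin{itemize}
\item Since $\pi\in\Pi_K$, the triple satisfies \hyperref[IC1]{IC1}--\hyperref[IC3]{IC3} for $K$. These are exactly the three inequalities in the first line of the definition of $\mathcal S(K)$.
\item Since $\pi\notin\Pi_{K^{-}}$ and \hyperref[DC]{DC} is common to both sets, $\pi$ must violate one of the initial conditions for $K^{-}$.
\end{itemize}
I then split into the two cases of Definition~\ref{D:Kminus}.
\begin{itemize}
\item If $k_2>0$, the condition \hyperref[IC1]{IC1} is unchanged, so $\pi$ violates the reduced \hyperref[IC2]{IC2} or \hyperref[IC3]{IC3}. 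That means $b_1+a_1\ge k_0+k_2$ or $c_1+b_1\ge k_0+k_2$. Combined with the $K$-inequalities, this gives equality in one of them.
\item If $k_2=0$ and $k_0>0$, there is a third possibility: $\pi$ violates the reduced \hyperref[IC1]{IC1}, i.e.\ $a_1=k_0$. This is the only point needing care. Here $b_1+a_1\ge a_1=k_0=k_0+k_2$, and \hyperref[IC2]{IC2} for $K$ gives the reverse inequality. Hence $b_1+a_1=k_0+k_2$, and the triple again lies in $\mathcal S(K)$.
\end{itemize}

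\emph{Every element of $\mathcal S(K)$ occurs.} Given $(a_1,b_1,c_1)\in\mathcal S(K)$, take $\pi$ with this degree $-1$ triple and all other parts zero.
\begin{itemize}
\item The initial conditions for $K$ hold by the definition of $\mathcal S(K)$.
\item For $j\ge2$ the difference conditions \hyperref[DC]{DC} are trivially satisfied.
\item For $j=1$ they reduce to $c_1\le k$, $c_1\le k$, $c_1+b_1\le k$ and $b_1+a_1\le k$. All of these follow from $c_1+b_1,\ b_1+a_1\le k_0+k_2\le k$.
\end{itemize}
So $\pi\in\Pi_K$. Finally, the equality $b_1+a_1=k_0+k_2$ or $c_1+b_1=k_0+k_2$ violates the corresponding reduced condition \hyperref[IC2]{IC2} or \hyperref[IC3]{IC3} for $K^{-}$, whose right side is $k_0+k_2-1$ in both cases. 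Hence $\pi\notin\Pi_{K^{-}}$.

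I expect no real obstacle. The only subtle step is the $k_2=0$ case above, where a failure of \hyperref[IC1]{IC1} alone must be shown to already force one of the stated equalities.
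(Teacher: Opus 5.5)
Your proposal is correct and follows the paper's proof essentially step for step. Both arguments use the same $k_2>0$ versus $k_2=0$ split, in the $k_2=0$ case both observe that a failure of the reduced \hyperref[IC1]{IC1} already forces $b_1+a_1\geq k_0$, and both use the same witness with all parts beyond degree $-1$ equal to zero. Your explicit check of \eqref{DC} at $j=1$ for that witness is a detail the paper only asserts.
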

\begin{proof}
    The difference conditions are the same for $K$ and $K^{-}$. Hence a partition
    in $\Pi_K$ fails to lie in $\Pi_{K^{-}}$ precisely when it violates at least
    one initial condition for $K^{-}$. We analyze the two cases in the definition
    of $K^{-}$ separately.

    \textit{Case $k_2>0$:} The initial conditions for $K^{-}=(k_0,k_1+1,k_2-1)$ are
    \begin{align*}
        a_1     & \leq k_0,       \\
        b_1+a_1 & \leq k_0+k_2-1, \\
        c_1+b_1 & \leq k_0+k_2-1.
    \end{align*}
    Since \hyperref[IC1]{IC1} is unchanged, violation requires
    \[
        b_1+a_1\geq k_0+k_2
        \quad\text{or}\quad
        c_1+b_1\geq k_0+k_2.
    \]
    Together with the initial conditions for $K$, this implies $b_1+a_1=k_0+k_2$ or
    $c_1+b_1=k_0+k_2$.

    \textit{Case $k_2=0$:} The initial conditions for $K^{-}=(k_0-1,k_1+1,0)$ are
    \begin{align*}
        a_1     & \leq k_0-1, \\
        b_1+a_1 & \leq k_0-1, \\
        c_1+b_1 & \leq k_0-1.
    \end{align*}
    If $a_1\geq k_0$, then $b_1+a_1\geq k_0$. Hence violation is
    equivalent to
    \[
        b_1+a_1\geq k_0
        \quad\text{or}\quad
        c_1+b_1\geq k_0.
    \]
    Together with the initial conditions for $K$, this implies
    \[
        b_1+a_1=k_0
        \quad\text{or}\quad
        c_1+b_1=k_0.
    \]
    Since $k_2=0$, this is equivalent to $b_1+a_1=k_0+k_2$ or $c_1+b_1=k_0+k_2$.

    Conversely, every triple satisfying the defining inequalities of
    $\mathcal{S}(K)$ is the degree $-1$ triple of some partition in
    $\Pi_K\setminus\Pi_{K^{-}}$. Namely, set $a_j=b_j=c_j=0$ for all $j\geq2$. The
    resulting partition satisfies \eqref{DC} at every $j$, satisfies the initial
    conditions for $K$ by assumption, and violates an initial condition for $K^{-}$
    by the equality $b_1+a_1=k_0+k_2$ or $c_1+b_1=k_0+k_2$.
\end{proof}

\subsection{Constraints at degree \texorpdfstring{$-2$}{-2}}

\begin{proposition}\label{P:deg2 constraints}
    Assume $k_0+k_2>0$ and fix
    $(a_1,b_1,c_1)\in\mathcal{S}(K)$. For
    $\pi\in\Pi_K\setminus\Pi_{K^{-}}$ with degree $-1$ triple
    $(a_1,b_1,c_1)$, the conditions \eqref{DC} with $j=1$ are
    equivalent to
    \[
        \begin{aligned}
            a_2     & \leq k_1,   \\
            b_2+a_2 & \leq k-c_1, \\
            c_2+b_2 & \leq k-c_1.
        \end{aligned}
    \]
    These are exactly the initial conditions for $\Pi_{(k_1,c_1,k_0+k_2-c_1)}$.
\end{proposition}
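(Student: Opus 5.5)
The plan is to write out the four inequalities of \eqref{DC} at $j=1$ explicitly and regroup them according to which degree $-2$ entries they involve. At $j=1$ they read
\[
c_2+b_2+c_1\le k,\qquad b_2+a_2+c_1\le k,\qquad a_2+c_1+b_1\le k,\qquad a_2+b_1+a_1\le k .
\]
The first two rearrange directly to $c_2+b_2\le k-c_1$ and $b_2+a_2\le k-c_1$, which are the second and third stated inequalities.

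The last two involve only $a_2$. Together they are equivalent to the single bound
\[
a_2\le k-\max(c_1+b_1,\;b_1+a_1).
\]
Now I use that $(a_1,b_1,c_1)\in\mathcal S(K)$, which Proposition~\ref{P:S(K)} guarantees for the degree $-1$ triple of any $\pi\in\Pi_K\setminus\Pi_{K^-}$. On the one hand, both $b_1+a_1$ and $c_1+b_1$ are at most $k_0+k_2$. On the other hand, at least one of them equals $k_0+k_2$. Hence the maximum is exactly $k_0+k_2$, and the bound becomes $a_2\le k-k_0-k_2=k_1$. This step, pinning the maximum to $k_0+k_2$, is the only place where $\pi\notin\Pi_{K^-}$ enters, and it is the key point of the argument. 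Since each rewriting above is an equivalence, the four conditions of \eqref{DC} at $j=1$ are equivalent to the three stated inequalities.

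For the final claim, I first check that $K'=(k_1,c_1,k_0+k_2-c_1)$ is a legitimate triple of nonnegative integers at level $k$. We have $c_1\le c_1+b_1\le k_0+k_2$, so $k_0+k_2-c_1\ge 0$, and the entries sum to $k_1+k_0+k_2=k$. Then I substitute $K'$ into \eqref{IC1}--\eqref{IC3}, with $(a_2,b_2,c_2)$ playing the role of the degree $-1$ triple:
\begin{itemize}
\item \eqref{IC1} becomes $a_2\le k_1$;
\item both \eqref{IC2} and \eqref{IC3} have right-hand side $k_1+(k_0+k_2-c_1)=k-c_1$.
\end{itemize}
These coincide with the three inequalities derived above, which gives the identification with the initial conditions of $\Pi_{(k_1,c_1,k_0+k_2-c_1)}$.
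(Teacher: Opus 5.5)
Your proof is correct and follows the paper's argument: you write out the four inequalities of \eqref{DC} at $j=1$, combine the two bounds on $a_2$ into $a_2\le k-\max(c_1+b_1,\,b_1+a_1)$, and use $(a_1,b_1,c_1)\in\mathcal{S}(K)$ to pin that maximum at $k_0+k_2$, which gives $a_2\le k_1$. Your extra check that $(k_1,c_1,k_0+k_2-c_1)$ has nonnegative entries and level $k$ is a small addition that the paper leaves implicit.
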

\begin{proof}
    Evaluating the four inequalities in \eqref{DC} at $j=1$ gives
    \begin{equation}\label{E:deg2 constraints}
        \begin{aligned}
            a_2     & \leq \min(k-c_1-b_1,\,k-b_1-a_1), \\
            b_2+a_2 & \leq k-c_1,                       \\
            c_2+b_2 & \leq k-c_1.
        \end{aligned}
    \end{equation}
    The initial conditions for $\Pi_{(k_1,c_1,k_0+k_2-c_1)}$ are
    \begin{align*}
        a_2     & \leq k_1,   \\
        b_2+a_2 & \leq k-c_1, \\
        c_2+b_2 & \leq k-c_1.
    \end{align*}
    The last two inequalities already agree with
    \eqref{E:deg2 constraints}. For the first, the defining conditions
    of $\mathcal{S}(K)$ give $b_1+a_1\leq k_0+k_2$ and
    $c_1+b_1\leq k_0+k_2$ with at least one equality, so
    $\max(b_1+a_1,\,c_1+b_1)=k_0+k_2$. Hence
    \begin{align*}
        \min(k-c_1-b_1,\,k-b_1-a_1) & = k-\max(c_1+b_1,\,b_1+a_1) \\
                                    & = k-(k_0+k_2) = k_1.
    \end{align*}
\end{proof}

The values of $c_1$ on $\mathcal{S}(K)$ are exactly $0,\ldots,k_0+k_2$, since
$(0,k_0+k_2-i,i)\in\mathcal{S}(K)$ for each such $i$. The weights
$(k_1,c_1,k_0+k_2-c_1)$ in Proposition~\ref{P:deg2 constraints} therefore run
over $(k_1,i,k_0+k_2-i)$, $i=0,\ldots,k_0+k_2$.

\begin{lemma}\label{L:deletion bijection}
    Fix $(a_1,b_1,c_1)\in\mathcal{S}(K)$. For a partition
    $\pi=(a_j,b_j,c_j)_{j\geq1}$ with degree $-1$ triple
    $(a_1,b_1,c_1)$, define $\pi'=(a'_j,b'_j,c'_j)_{j\geq1}$ by
    \[
        \begin{aligned}
            a'_j & = a_{j+1}, \\
            b'_j & = b_{j+1}, \\
            c'_j & = c_{j+1}.
        \end{aligned}
    \]
    Then $\pi\mapsto\pi'$ is a bijection from the set of all
    $\pi\in\Pi_K\setminus\Pi_{K^{-}}$ with degree $-1$ triple $(a_1,b_1,c_1)$ onto
    $\Pi_{(k_1,c_1,k_0+k_2-c_1)}$.

    If
    \[
        d=\deg(\pi'),\qquad
        n_1=|\pi'|_2,\quad n_2=|\pi'|_0,\quad
        n_3=|\pi'|_{\underline{2}},
    \]
    then
    \[
        \deg(\pi)=(a_1+b_1+c_1)+d+n_1+n_2+n_3.
    \]
\end{lemma}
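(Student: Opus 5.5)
The plan is to check directly that the shift map and its inverse (prepending $(a_1,b_1,c_1)$) carry the defining conditions of one set onto those of the other. Then the degree formula follows from a short computation.

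\emph{Well-definedness.} Let $\pi\in\Pi_K\setminus\Pi_{K^{-}}$ have degree $-1$ triple $(a_1,b_1,c_1)\in\mathcal S(K)$. The difference conditions \eqref{DC} for $\pi'$ at index $j\ge1$ are exactly \eqref{DC} for $\pi$ at index $j+1$, so they hold. The initial conditions for $\pi'$ with respect to $(k_1,c_1,k_0+k_2-c_1)$ read $a_2\le k_1$, $b_2+a_2\le k-c_1$, $c_2+b_2\le k-c_1$. Here the level is $k_1+c_1+k_0+k_2-c_1=k$, and the right side of IC2 and IC3 is $k_0'+k_2'=k_1+k_0+k_2-c_1=k-c_1$. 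By Proposition~\ref{P:deg2 constraints}, these conditions are equivalent to \eqref{DC} for $\pi$ at $j=1$, which holds. Hence $\pi'\in\Pi_{(k_1,c_1,k_0+k_2-c_1)}$.

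\emph{Inverse map.} Given $\rho\in\Pi_{(k_1,c_1,k_0+k_2-c_1)}$, define $\pi$ by placing $(a_1,b_1,c_1)$ at degree $-1$ and $\rho$ shifted by one. The map $\pi\mapsto\pi'$ is clearly injective, since $\pi$ is recovered from the fixed triple and $\pi'$. For surjectivity we check that $\pi\in\Pi_K\setminus\Pi_{K^{-}}$:
\begin{itemize}
\item IC1--IC3 for $K$ hold because $(a_1,b_1,c_1)\in\mathcal S(K)$.
\item One initial condition for $K^{-}$ fails because of the equality in the definition of $\mathcal S(K)$, as in Proposition~\ref{P:S(K)}.
\item \eqref{DC} at $j\ge2$ is \eqref{DC} for $\rho$.
\item \eqref{DC} at $j=1$ is equivalent, by the computation in Proposition~\ref{P:deg2 constraints}, to the initial conditions of $\rho$.
\end{itemize}
The only subtle point is that this equivalence uses just the triple $(a_1,b_1,c_1)\in\mathcal S(K)$, namely $\max(b_1+a_1,c_1+b_1)=k_0+k_2$, and not any a priori membership of $\pi$. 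That is indeed how the proof of Proposition~\ref{P:deg2 constraints} runs, so it applies verbatim here.

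\emph{Degree formula.} We compute
\[
\deg(\pi)=\sum_j j(a_j+b_j+c_j)=(a_1+b_1+c_1)+\sum_{j\ge1}(j+1)(a'_j+b'_j+c'_j)=(a_1+b_1+c_1)+d+n_1+n_2+n_3.
\]
No real obstacle is expected. The main care is in confirming that the initial conditions for the weight $(k_1,c_1,k_0+k_2-c_1)$ have right sides $k_1$ and $k-c_1$, and that the equivalence at $j=1$ is usable in the reverse direction.
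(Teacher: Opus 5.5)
Your proposal is correct and follows essentially the same route as the paper's proof. You check well-definedness by reindexing the difference conditions for $j\ge 2$ and matching the $j=1$ conditions with the initial conditions for $(k_1,c_1,k_0+k_2-c_1)$ via the computation of Proposition~\ref{P:deg2 constraints}; as you rightly note, that computation uses only $\max(b_1+a_1,c_1+b_1)=k_0+k_2$. You obtain injectivity and surjectivity by prepending the fixed triple from $\mathcal S(K)$, and the degree identity comes from the same reindexing sum.
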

\begin{proof}
    We verify that the map $\pi\mapsto\pi'$ is well-defined, injective, and
    surjective, and then compute the degree.

    \textit{Well-defined:} By Proposition~\ref{P:S(K)}, the fixed triple
    $(a_1,b_1,c_1)$ satisfies the initial conditions for $K$ and fails the initial
    conditions for $K^{-}$. The initial conditions for
    $\Pi_{(k_1,c_1,k_0+k_2-c_1)}$ are
    \[
        \begin{aligned}
            a'_1      & \leq k_1,   \\
            b'_1+a'_1 & \leq k-c_1, \\
            c'_1+b'_1 & \leq k-c_1.
        \end{aligned}
    \]
    Since $a'_1=a_2$, $b'_1=b_2$, and $c'_1=c_2$, these are exactly the conditions
    obtained in Proposition~\ref{P:deg2 constraints}. For the conditions \eqref{DC}
    with $j\geq2$, reindexing by $i=j-1$ gives the difference conditions for $\pi'$
    with $i\geq1$.

    \textit{Injective:} The degree $-1$ triple $(a_1,b_1,c_1)$ is fixed, so
    distinct partitions with degree $-1$ triple $(a_1,b_1,c_1)$ have distinct
    images.

    \textit{Surjective:} Given $\pi'\in\Pi_{(k_1,c_1,k_0+k_2-c_1)}$, write
    $\pi'=(a'_j,b'_j,c'_j)_{j\geq 1}$. Let $\pi$ be the partition whose degree $-1$
    triple is $(a_1,b_1,c_1)$ and whose remaining entries are
    \[
        (a_j,b_j,c_j)=(a'_{j-1},b'_{j-1},c'_{j-1})\quad\text{for } j\geq 2.
    \]
    The initial conditions for $\pi'$ are exactly the inequalities obtained from
    \eqref{DC} at $j=1$ by the calculation in Proposition~\ref{P:deg2 constraints}.
    Therefore, the difference conditions for $\pi$ hold at $j=1$. For $j\geq2$, the
    difference conditions for $\pi$ follow from the difference conditions for
    $\pi'$ by reindexing. Because $(a_1,b_1,c_1)\in\mathcal{S}(K)$, the initial
    conditions for $K$ hold and the initial conditions for $K^{-}$ fail. Hence the
    constructed partition lies in $\Pi_K\setminus\Pi_{K^{-}}$ and has image $\pi'$.

    \textit{Degree statement:} The partition $\pi$ has degree
    \begin{align*}
        \sum_{j\geq 1}j(a_j+b_j+c_j)
         & = (a_1+b_1+c_1) + \sum_{j\geq 2}j(a'_{j-1}+b'_{j-1}+c'_{j-1})                      \\
         & = (a_1+b_1+c_1) + \sum_{i\geq 1}(i+1)(a'_i+b'_i+c'_i)                              \\
         & = (a_1+b_1+c_1) + \sum_{i\geq 1}i(a'_i+b'_i+c'_i) + \sum_{i\geq 1}(a'_i+b'_i+c'_i) \\
         & = (a_1+b_1+c_1) + d + (n_1+n_2+n_3).
    \end{align*}
\end{proof}

\section{The recurrence relations}\label{S:recurrence}

\begin{theorem}\label{T:main}
    Let $K=(k_0,k_1,k_2)$ be a level $k$ weight.

    If $k_0+k_2>0$, let $K^{-}$ and $\mathcal{S}(K)$ be as in
    Definition~\ref{D:Kminus} and Proposition~\ref{P:S(K)}. Then
    \begin{equation}\label{E:main}
        \chi_K(\mathbf{z},q)-\chi_{K^{-}}(\mathbf{z},q)
        = \sum_{i=0}^{k_0+k_2}\mathcal{P}_i(\mathbf{z},q)\,
        \chi_{(k_1,i,k_0+k_2-i)}(\mathbf{z}q,q),
    \end{equation}
    where $\mathbf{z}q=(z_1q,z_2q,z_3q)$ and
    \begin{equation}\label{E:Pm}
        \mathcal{P}_i(\mathbf{z},q) =
        \sum_{\substack{(a_1,b_1,c_1)\in\mathcal{S}(K)\\c_1=i}}
        z_1^{a_1}\,z_2^{b_1}\,z_3^{c_1}\,q^{a_1+b_1+c_1}.
    \end{equation}

    If $k_0+k_2=0$, then $K=(0,k,0)$ and
    \begin{equation}\label{E:base}
        \chi_{(0,k,0)}(\mathbf{z},q)
        =
        \chi_{(k,0,0)}(\mathbf{z}q,q).
    \end{equation}
\end{theorem}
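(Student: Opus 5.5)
The plan is to compute the right side of \eqref{E:LHS gen fun} by splitting $\Pi_K\setminus\Pi_{K^{-}}$ according to the degree $-1$ triple. By Proposition~\ref{P:S(K)}, this set is the disjoint union over $(a_1,b_1,c_1)\in\mathcal S(K)$ of the fibers $F(a_1,b_1,c_1)$, each consisting of the partitions with that degree $-1$ triple. Lemma~\ref{L:deletion bijection} identifies each fiber bijectively with $\Pi_{(k_1,c_1,k_0+k_2-c_1)}$ via $\pi\mapsto\pi'$. For the color statistics we have $|\pi|_2=a_1+|\pi'|_2$, $|\pi|_0=b_1+|\pi'|_0$ and $|\pi|_{\underline 2}=c_1+|\pi'|_{\underline 2}$. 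The degree satisfies $\deg\pi=(a_1+b_1+c_1)+\deg\pi'+n_1+n_2+n_3$. The weight of $\pi$ therefore factors as
\[
z_1^{a_1}z_2^{b_1}z_3^{c_1}q^{a_1+b_1+c_1}\cdot (z_1q)^{n_1}(z_2q)^{n_2}(z_3q)^{n_3}q^{\deg\pi'} .
\]
Summing over $\pi'$ in a fixed fiber gives $z_1^{a_1}z_2^{b_1}z_3^{c_1}q^{a_1+b_1+c_1}\,\chi_{(k_1,c_1,k_0+k_2-c_1)}(\mathbf z q,q)$.

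Next we group the triples in $\mathcal S(K)$ by the value $i=c_1$. By the remark after Proposition~\ref{P:deg2 constraints}, $i$ ranges exactly over $0,\dots,k_0+k_2$. The shifted character depends only on $c_1$, so collecting the monomial prefactors for fixed $i$ produces $\mathcal P_i(\mathbf z,q)$ as in \eqref{E:Pm}, and this yields \eqref{E:main}. All series are formal power series in $q$ with finitely many terms in each degree, and $\mathcal S(K)$ is finite, so rearranging the sums is harmless.

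For the case $k_0+k_2=0$, the initial conditions for $K=(0,k,0)$ force $a_1=0$, $b_1\le 0$ and $c_1\le 0$, so every $\pi\in\Pi_{(0,k,0)}$ has degree $-1$ triple $(0,0,0)$. Deleting this triple gives a partition $\pi'$. We check that the map is a bijection onto $\Pi_{(k,0,0)}$. With $(a_1,b_1,c_1)=0$, the conditions \eqref{DC} at $j=1$ reduce to $a_2\le k$, $b_2+a_2\le k$ and $c_2+b_2\le k$, which are exactly the initial conditions for $(k,0,0)$. Conversely, prepending a zero triple to any $\pi'\in\Pi_{(k,0,0)}$ gives a partition in $\Pi_{(0,k,0)}$. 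The same degree computation as in Lemma~\ref{L:deletion bijection}, with the prefactor now equal to $1$, gives \eqref{E:base}.

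The proof should have no serious obstacle, since the combinatorial work is already done in Lemma~\ref{L:deletion bijection}. The step requiring the most care is confirming that the color exponents shift correctly, so that $(\mathbf z q,q)$ is the right substitution, and that the fibers are disjoint and exhaust $\Pi_K\setminus\Pi_{K^-}$. A second point to check is that the $k_0+k_2=0$ case is not covered by Lemma~\ref{L:deletion bijection} (since $K^-$ is undefined there), so its analogous bijection must be verified directly as above.
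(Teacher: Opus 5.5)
Your proposal is correct and follows the paper's proof essentially step for step. You split $\Pi_K\setminus\Pi_{K^{-}}$ by the degree $-1$ triple using Proposition~\ref{P:S(K)}, use Lemma~\ref{L:deletion bijection} to factor each weight into the monomial prefactor times a term of $\chi_{(k_1,c_1,k_0+k_2-c_1)}(\mathbf zq,q)$, group by $c_1=i$, and handle $K=(0,k,0)$ directly by showing that deleting the forced zero triple gives a bijection onto $\Pi_{(k,0,0)}$, which is exactly how the paper treats both cases.
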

\begin{proof}
    Assume first that $k_0+k_2>0$. By \eqref{E:LHS gen fun}, the left side of
    \eqref{E:main} is the generating function of $\Pi_K\setminus\Pi_{K^{-}}$. By
    Lemma~\ref{L:deletion bijection}, the set $\Pi_K\setminus\Pi_{K^{-}}$ is
    partitioned by the degree $-1$ triple $(a_1,b_1,c_1)\in\mathcal{S}(K)$, and for
    each fixed triple the deletion bijection maps the subset with degree $-1$
    triple $(a_1,b_1,c_1)$ onto $\Pi_{(k_1,c_1,k_0+k_2-c_1)}$.

    For a partition with degree $-1$ triple $(a_1,b_1,c_1)$ whose image $\pi'$
    satisfies $|\pi'|_2=n_1$, $|\pi'|_0=n_2$, $|\pi'|_{\underline{2}}=n_3$, and
    $\deg(\pi')=d$, the contribution to the generating function is
    \begin{align*}
        z_1^{a_1+n_1}\,z_2^{b_1+n_2}\,z_3^{c_1+n_3}\,
        q^{(a_1+b_1+c_1)+(d+n_1+n_2+n_3)}
         & = z_1^{a_1} z_2^{b_1} z_3^{c_1} q^{a_1+b_1+c_1}             \\
         & \quad\cdot z_1^{n_1} z_2^{n_2} z_3^{n_3} q^{d+n_1+n_2+n_3}.
    \end{align*}
    Summing over all images gives
    \[
        z_1^{a_1} z_2^{b_1} z_3^{c_1} q^{a_1+b_1+c_1}
        \cdot \chi_{(k_1,c_1,k_0+k_2-c_1)}(\mathbf{z}q,q).
    \]
    Summing over all $(a_1,b_1,c_1)\in\mathcal{S}(K)$ and grouping by $c_1=i$
    yields \eqref{E:main}.

    It remains to consider the case $k_0+k_2=0$. Then $K=(0,k,0)$, and the initial
    conditions
    \begin{align*}
        a_1     & \leq 0, \\
        b_1+a_1 & \leq 0, \\
        c_1+b_1 & \leq 0
    \end{align*}
    force $a_1=b_1=c_1=0$. Evaluating \eqref{DC} at $j=1$ gives
    \begin{align*}
        a_2     & \leq k, \\
        b_2+a_2 & \leq k, \\
        c_2+b_2 & \leq k,
    \end{align*}
    which are the initial conditions for $\Pi_{(k,0,0)}$. Thus deleting
    the zero degree $-1$ triple and reindexing gives a bijection from
    $\Pi_{(0,k,0)}$ to $\Pi_{(k,0,0)}$. After the zero degree $-1$
    triple is deleted and the remaining entries are reindexed, each
    remaining factor contributes one extra power of $q$, and hence
    \[
        \chi_{(0,k,0)}(\mathbf z,q)=\chi_{(k,0,0)}(\mathbf zq,q),
    \]
    which is \eqref{E:base}.
\end{proof}

\subsection{Explicit form of \texorpdfstring{$\mathcal{P}_i$}{Pi}}\label{SS:Pm explicit}

\begin{corollary}\label{C:Pm explicit}
    For $i=0,\ldots,k_0+k_2$,
    \begin{equation}\label{E:Pm explicit}
        \mathcal{P}_i(\mathbf{z},q) = z_3^i\bigg[
        q^{k_0+k_2+i}\!\sum_{a_1=i}^{k_0} z_2^{k_0+k_2-a_1}\,z_1^{a_1}
        \,+\, z_2^{k_0+k_2-i}q^{k_0+k_2}\!\sum_{a_1=0}^{\min(i-1,k_0)}(z_1q)^{a_1}
        \bigg],
    \end{equation}
    with the convention that the first sum is empty when $i>k_0$ and the second is empty when $i=0$.
\end{corollary}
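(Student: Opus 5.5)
Fix $i\in\{0,\ldots,k_0+k_2\}$ and write $m=k_0+k_2$. By \eqref{E:Pm}, $\mathcal P_i$ is the generating function of the fiber $\{(a_1,b_1,c_1)\in\mathcal S(K)\mid c_1=i\}$. The plan is to describe this fiber explicitly, split it into two disjoint pieces according to which equality in the definition of $\mathcal S(K)$ is active, and then sum a monomial over each piece. Setting $c_1=i$, the conditions become $a_1\le k_0$, $b_1+a_1\le m$, $b_1\le m-i$, with $b_1+a_1=m$ or $b_1=m-i$. Throughout, the factor $z_3^iq^i$ comes from $c_1=i$.

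The key step is a case split on the comparison of $a_1$ with $i$, which decides which equality can hold.

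\emph{Case $a_1\ge i$.} Then $b_1\le m-a_1\le m-i$, so $b_1=m-i$ forces $a_1=i$ and hence also $b_1+a_1=m$. Thus in this range the condition is exactly $b_1=m-a_1$, with $i\le a_1\le k_0$ (and $b_1=m-a_1\ge0$ is automatic since $a_1\le k_0\le m$). The monomial is $z_1^{a_1}z_2^{m-a_1}z_3^iq^{m+i}$. Summing gives the first bracketed sum of \eqref{E:Pm explicit}, which is empty when $i>k_0$.

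\emph{Case $a_1<i$.} Then $b_1\le m-i<m-a_1$, so $b_1+a_1<m$. Hence the condition $b_1+a_1=m$ cannot hold, and we need $b_1=m-i$. All constraints then reduce to $0\le a_1\le\min(i-1,k_0)$. The monomial is $z_1^{a_1}z_2^{m-i}z_3^iq^{a_1+m}$. Summing gives the second term, which is empty when $i=0$.

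The two cases are disjoint and together exhaust the fiber, so adding the two sums yields \eqref{E:Pm explicit}. The only delicate point is the boundary $a_1=i$, where both equalities hold at once. Placing it only in the first case avoids double counting, and this is consistent with the second sum stopping at $i-1$. I would also check briefly that the ranges are correct when $i>k_0$, where only the second sum survives and it runs up to $k_0$.
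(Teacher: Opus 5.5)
Your proof is correct and is essentially the paper's argument. Your split by $a_1\ge i$ versus $a_1<i$ gives exactly the paper's two disjoint pieces, namely $b_1+a_1=k_0+k_2$, and $c_1+b_1=k_0+k_2$ with $b_1+a_1<k_0+k_2$, and like the paper you place the boundary point $a_1=i$ in the first piece (you also check side conditions such as $b_1\ge 0$ that the paper leaves implicit).
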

\begin{proof}
    The triples in $\mathcal S(K)$ with $c_1=i$ split into two disjoint cases. If
    $b_1+a_1=k_0+k_2$, then $b_1=k_0+k_2-a_1$. Since $c_1+b_1\leq k_0+k_2$ and
    $b_1=k_0+k_2-a_1$, we have $c_1\leq a_1$, hence $i\leq a_1\leq k_0$. The
    contribution from triples satisfying $b_1+a_1=k_0+k_2$ to \eqref{E:Pm} is
    \[
        z_3^i q^{k_0+k_2+i}\sum_{a_1=i}^{k_0} z_2^{k_0+k_2-a_1}z_1^{a_1}.
    \]
    If $c_1+b_1=k_0+k_2$ and $b_1+a_1<k_0+k_2$, then $b_1=k_0+k_2-i$ and $0\leq
        a_1\leq \min(i-1,k_0)$. The contribution from triples satisfying
    $c_1+b_1=k_0+k_2$ and $b_1+a_1<k_0+k_2$ is
    \[
        z_3^i z_2^{k_0+k_2-i} q^{k_0+k_2} \sum_{a_1=0}^{\min(i-1,k_0)} (z_1q)^{a_1}.
    \]
    Summing the two contributions gives \eqref{E:Pm explicit}.
\end{proof}

\subsection{Uniqueness}\label{SS:uniqueness}

We regard the formal characters as elements of $\mathbb Z[z_1,z_2,z_3][[q]]$.
For each $d$ there are only finitely many colored partitions of degree $d$, and
therefore the coefficient of $q^d$ in every character is a polynomial in
$z_1,z_2,z_3$.

\begin{theorem}\label{T:uniqueness}
    The system \eqref{E:main}--\eqref{E:base}, together with the condition that the constant term of each series $\chi_K(\mathbf z,q)$ is equal to $1$, uniquely determines $\chi_K(\mathbf z,q)$ for all $K$ at level~$k$.
\end{theorem}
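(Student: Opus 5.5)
We will argue by induction on the $q$-degree: suppose $(\chi_K)_K$ and $(\chi'_K)_K$ are two families in $\mathbb Z[z_1,z_2,z_3][[q]]$, indexed by the weights $K$ at level $k$. Assume both satisfy \eqref{E:main}--\eqref{E:base} and both have constant term $1$. Put $\Delta_K=\chi_K-\chi'_K$. Since both systems are linear, the differences satisfy the homogeneous system
\[
\Delta_K(\mathbf z,q)-\Delta_{K^{-}}(\mathbf z,q)=\sum_{i=0}^{k_0+k_2}\mathcal P_i(\mathbf z,q)\,\Delta_{(k_1,i,k_0+k_2-i)}(\mathbf zq,q),
\qquad \Delta_{(0,k,0)}(\mathbf z,q)=\Delta_{(k,0,0)}(\mathbf zq,q),
\]
and each $\Delta_K$ has constant term $0$. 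We show by induction on $d\ge 0$ that the coefficient $[q^d]\Delta_K$ vanishes for all $K$ at once. The base case $d=0$ is the normalization.

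The key observation concerns the right-hand sides. By Corollary~\ref{C:Pm explicit}, each $\mathcal P_i$ is divisible by $q^{k_0+k_2}$, so $\mathcal P_i\in q\,\mathbb Z[\mathbf z][q]$ when $k_0+k_2>0$. Moreover, the substitution $\mathbf z\mapsto\mathbf zq$ sends the coefficient of $q^e$ to a sum of terms of $q$-degree at least $e$; it never lowers degree. Hence $[q^d]$ of the right side of \eqref{E:main} involves only $[q^e]\Delta_{K'}$ with $e\le d-1$, and these vanish by the induction hypothesis. The same reasoning does \emph{not} directly apply to \eqref{E:base}, since there the substitution alone does not raise the degree. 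Instead, write
\[
\Delta_{(k,0,0)}=\sum_{e\ge 1}\delta_e(\mathbf z)q^e.
\]
Then the $q^d$ coefficient of $\Delta_{(k,0,0)}(\mathbf zq,q)$ is the sum of the homogeneous components of total $\mathbf z$-degree $m$ of $\delta_e$, taken over all pairs with $e+m=d$. So $[q^d]$ of the base relation is determined by $\delta_e$ with $e\le d$. The pair $e=d$, $m=0$ contributes only the $\mathbf z$-constant part of $\delta_d$.

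Next we fix $d\ge1$ and chain the relations. Equation \eqref{E:main} gives $[q^d]\Delta_K=[q^d]\Delta_{K^-}$ whenever $k_0+k_2>0$. Iterating $K\mapsto K^-$ strictly decreases $k_0+k_2$: first $k_2$ drops to $0$, then $k_0$ drops. So every $K$ is linked to $(0,k,0)$, and
\[
[q^d]\Delta_K=[q^d]\Delta_{(0,k,0)} \quad\text{for all }K.
\]
In particular $[q^d]\Delta_{(k,0,0)}=[q^d]\Delta_{(0,k,0)}$. Denote this common value by $\delta_d$. The base relation then reads
\[
\delta_d=\sum_{e+m=d}(\delta_e)_m .
\]
By induction, $\delta_e=0$ for $e<d$, which leaves $\delta_d=(\delta_d)_0$. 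Thus $\delta_d$ is a constant in $\mathbb Z$, that is, it has no $\mathbf z$-dependence.

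The main obstacle is ruling out this constant, because the base relation by itself permits it. For this we use the genuine content of the recurrence. All characters lie in $\mathbb Z[\mathbf z][[q]]$, and every partition of positive degree has at least one part. Accordingly, we should impose, or derive, that the $\mathbf z$-constant term of each $\Delta_K$ equals $0$ in every positive $q$-degree. Here is the first attempt at deriving it. Set $\mathbf z=0$ throughout the system. Every $\mathcal P_i(0,q)$ vanishes, because $\mathcal S(K)$ never contains $(0,0,0)$ when $k_0+k_2>0$. Hence $\Delta_K(0,q)=\Delta_{K^-}(0,q)$, and the base relation becomes $\Delta_{(0,k,0)}(0,q)=\Delta_{(k,0,0)}(0,0)=0$, using the constant-term normalization. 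Consequently $\Delta_K(0,q)=0$ for all $K$, so $(\delta_d)_0=0$ and hence $\delta_d=0$. This completes the induction. The check needing care is exactly this specialization $\mathbf z=0$ in the base relation, where $\mathbf zq$ becomes $0$ and the normalization kills the series entirely.
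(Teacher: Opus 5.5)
Your reduction is correct up to the point where you isolate the obstruction. The chain gives $[q^d]\Delta_K=\delta_d$ for every $K$, and \eqref{E:base} then forces $\delta_d=(\delta_d)_0\in\mathbb Z$. The step meant to remove this constant fails. Putting $\mathbf z=\mathbf 0$ in $\Delta_{(k,0,0)}(\mathbf zq,q)$ gives $\Delta_{(k,0,0)}(\mathbf 0,q)$, not $\Delta_{(k,0,0)}(\mathbf 0,0)$. The substitution only affects the first three arguments, and the last one stays $q$. So the specialized base relation is $\Delta_{(0,k,0)}(\mathbf 0,q)=\Delta_{(k,0,0)}(\mathbf 0,q)$. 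Together with $\Delta_K(\mathbf 0,q)=\Delta_{K^-}(\mathbf 0,q)$, this only says that all $\Delta_K(\mathbf 0,q)$ equal one common series $g(q)$, and it puts no condition on $g$.

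A better derivation cannot fix this, because \eqref{E:main}, \eqref{E:base} and $f_K^{(0)}=1$ genuinely leave the constant undetermined. Split the homogeneous system by total $\mathbf z$-degree, $\Delta_K=\sum_{m\geq 0}\Delta_K^{[m]}$. Every monomial of every $\mathcal P_i$ has positive $\mathbf z$-degree, and $\Delta^{[m]}(\mathbf zq,q)=q^m\Delta^{[m]}(\mathbf z,q)$. So in $\mathbf z$-degree $m$, running the chain from $(k,0,0)$ to $(0,k,0)$ and using \eqref{E:base} gives $(1-q^m)\Delta^{[m]}_{(k,0,0)}=R^{[m]}$. Here $R^{[m]}$ involves only components of $\mathbf z$-degree $<m$ and is divisible by $q^m$.

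For $m\geq 1$ this determines all $\Delta^{[m]}_K$ uniquely, each divisible by $q^m$. For $m=0$ it is vacuous. Hence every $g\in q\mathbb Z[[q]]$, for instance $g=q$, extends to a nonzero solution $\Delta$. This solution has vanishing $q^0$-coefficients, and each of its $q^d$-coefficients is a polynomial in $\mathbf z$. Then $\chi_K+\Delta_K$ satisfies the whole system and the normalization, yet differs from $\chi_K$.

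So the missing fact has to be imposed, as you first suggested. One needs $\chi_K(\mathbf 0,q)=1$, i.e.\ no $\mathbf z$-free terms in positive $q$-degree. This holds for the actual characters because the empty partition is the only one with no parts. The paper's proof relies on exactly this fact without stating it: in the step for $(0,k,0)$ it asserts that ``the empty monomial contributes only to degree $0$''. With that normalization added, $(\delta_d)_0=0$ and your induction closes. Your argument then coincides with the paper's.
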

\begin{proof}
    Write
    \[
        \chi_K(\mathbf z,q)=\sum_{d\geq 0}f_K^{(d)}(\mathbf z)q^d.
    \]
    The condition on the constant terms gives $f_K^{(0)}(\mathbf z)=1$ for all
    weights $K$ at level~$k$. We prove by induction on $d$ that $f_K^{(d)}$ is
    uniquely determined for all such weights $K$.

    The recurrence \eqref{E:main} expresses $\chi_K-\chi_{K^{-}}$ in terms of
    products $\mathcal{P}_i(\mathbf{z},q)\cdot\chi_{K'}(\mathbf{z}q,q)$ for various
    $K'$. Since $k_0+k_2>0$, every triple in $\mathcal{S}(K)$ has
    $a_1+b_1+c_1\geq1$. Hence, by \eqref{E:Pm}, each $\mathcal{P}_i(\mathbf z,q)$
    has minimum $q$-degree at least $1$. Under the substitution
    $\mathbf{z}\mapsto\mathbf{z}q$, a monomial $\mathbf{z}^{\mathbf{n}}q^{d'}$ in
    $\chi_{K'}$ becomes $\mathbf{z}^{\mathbf{n}}q^{d'+|\mathbf{n}|}$, where
    $|\mathbf{n}|=n_1+n_2+n_3\geq 0$. Hence the coefficient of $q^d$ in the product
    $\mathcal{P}_i\cdot\chi_{K'}(\mathbf{z}q,q)$ depends only on coefficients
    $f_{K'}^{(d')}$ with $d'<d$.

    Fix $d\geq1$ and assume that all coefficients of degree $<d$ are known. We
    first determine $f_{(0,k,0)}^{(d)}$. By \eqref{E:base},
    \[
        \chi_{(0,k,0)}(\mathbf z,q)=\chi_{(k,0,0)}(\mathbf zq,q).
    \]
    The coefficient of $q^d$ on the right depends only on coefficients
    $f_{(k,0,0)}^{(d')}$ with $d'<d$: the empty monomial contributes only to degree
    $0$, and every non-empty monomial gains a positive power of $q$ under the
    substitution $\mathbf z\mapsto\mathbf zq$. Hence $f_{(0,k,0)}^{(d)}$ is
    determined.

    Now let $K$ be any level $k$ weight with $k_0+k_2>0$. Since each step $K\mapsto
        K^{-}$ decreases $k_0+k_2$ by one, iterating $K\mapsto K^{-}$ reaches $(0,k,0)$
    after finitely many steps. Along this chain, \eqref{E:main} determines
    \[
        f_K^{(d)}-f_{K^{-}}^{(d)}
    \]
    from coefficients of degree $<d$, which are already known. Starting from the
    already determined coefficient for $(0,k,0)$, we determine $f_K^{(d)}$
    successively along the chain. Thus all coefficients $f_K^{(d)}$ are uniquely
    determined.
\end{proof}

\section{Level one character formulas}\label{S:level-one}

Write
\begin{align*}
    \chi_{i,j,l}   & = \chi_{(i,j,l)}(\mathbf z,q),  \\
    \chi^+_{i,j,l} & = \chi_{(i,j,l)}(\mathbf zq,q).
\end{align*}

At level $1$, Theorem~\ref{T:main} and Corollary~\ref{C:Pm explicit} give
\begin{align}
    \chi_{1,0,0}-\chi_{0,1,0}
     & = q(z_2+z_1)\chi^+_{0,0,1}
    + qz_3(1+z_1q)\chi^+_{0,1,0},
    \label{E:lvl1 r1}
    \\[4pt]
    \chi_{0,0,1}-\chi_{0,1,0}
     & = qz_2\chi^+_{0,0,1}
    + qz_3\chi^+_{0,1,0},
    \label{E:lvl1 r2}
    \\[4pt]
    \chi_{0,1,0}
     & = \chi^+_{1,0,0}.
    \label{E:lvl1 r3}
\end{align}

Although Theorem~\ref{T:uniqueness} determines the characters recursively at
every level, the level $1$ system can be solved explicitly. For nonnegative
integers $m$, set
\[
    (q)_0=1,\qquad
    (q)_m=\prod_{j=1}^m(1-q^j).
\]
For integers $r$ and $s$ with $0\leq s\leq r$, define the Gaussian binomial
coefficient with base $q^2$ by
\[
    \binom{r}{s}_{q^2}
    =
    \prod_{j=1}^{s}
    \frac{1-q^{2(r-s+j)}}{1-q^{2j}}.
\]
We set $\binom{r}{s}_{q^2}=0$ if $s<0$ or $s>r$.

\begin{theorem}\label{T:level-one-character-formulas}
    Define
    \[
        F(\mathbf z,q)
        =
        \sum_{n_1,n_2,n_3\geq0}
        \frac{q^{n_1^2+n_3^2+\binom{n_2+1}{2}+n_1n_2+n_2n_3}}
        {(q)_{n_2}(q)_{n_1+n_3}}
        \binom{n_1+n_3}{n_1}_{q^2}\,
        z_1^{n_1}z_2^{n_2}z_3^{n_3}.
    \]
    Here $n_1,n_2,n_3$ record the multiplicities of the colors $2,0,\underline 2$,
    corresponding to the variables $z_1,z_2,z_3$, respectively. Then the level $1$
    characters are given by
    \begin{equation}\label{E:level-one-chi}
        \begin{aligned}
            \chi_{1,0,0}
             & = F(\mathbf z,q),  \\
            \chi_{0,1,0}
             & = F(\mathbf zq,q), \\
            \chi_{0,0,1}
             & =
            F\bigl((z_1q,\,z_2,\,z_3q),\,q\bigr)
            + qz_3\,F\bigl((z_1q^2,\,z_2q,\,z_3q^2),\,q\bigr).
        \end{aligned}
    \end{equation}
\end{theorem}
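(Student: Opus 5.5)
By Theorem~\ref{T:uniqueness} at level $1$, the system \eqref{E:lvl1 r1}--\eqref{E:lvl1 r3} together with the constant term normalization determines $\chi_{1,0,0},\chi_{0,1,0},\chi_{0,0,1}$ uniquely. The plan is therefore to define the three right-hand sides of \eqref{E:level-one-chi} as candidate series and to show that they satisfy this system. Each candidate visibly has constant term $1$: $F$ has constant term $1$, and the extra term $qz_3F(\dots)$ has positive $q$-degree.

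Write $A=F(\mathbf z,q)$, $B=F(\mathbf zq,q)$ and $C=F((z_1q,z_2,z_3q),q)+qz_3F((z_1q^2,z_2q,z_3q^2),q)$.

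Relation \eqref{E:lvl1 r3} holds by construction, since $B=A^+$.

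Substituting $A^+=B$ and $B^+=F(\mathbf zq^2,q)$ reduces \eqref{E:lvl1 r1} and \eqref{E:lvl1 r2} to two functional identities for the single series $F$. With $\mathbf z q^{\mathbf e}$ denoting the coordinatewise shift $z_i\mapsto z_iq^{e_i}$, these identities involve only shifts $F(\mathbf zq^{\mathbf e},q)$ multiplied by monomials. Such a shift multiplies the coefficient of $z_1^{n_1}z_2^{n_2}z_3^{n_3}$ by $q^{e\cdot n}$. So each identity becomes, after extracting the coefficient of $z_1^{n_1}z_2^{n_2}z_3^{n_3}$, an identity among the summands
\[
T(n_1,n_2,n_3)=\frac{q^{Q(n)}}{(q)_{n_2}(q)_{n_1+n_3}}\binom{n_1+n_3}{n_1}_{q^2},
\]
evaluated at $n$ and at $n$ with one coordinate lowered by $1$ (coming from the monomial prefactors $qz_2$, $qz_3$, $qz_1$, $qz_1z_3q$). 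Here $Q(n)$ denotes the exponent of $q$ in the numerator of the summand of $F$.

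I would then divide through by $T(n)$. The ratios $T(n-e_i)/T(n)$ are explicit rational functions:
\begin{itemize}
\item lowering $n_2$ yields a factor $(1-q^{n_2})$ times a power of $q$;
\item lowering $n_1$ or $n_3$ yields $(1-q^{n_1+n_3})$ times the ratio of Gaussian binomials with base $q^2$, namely $(1-q^{2n_1})/(1-q^{2(n_1+n_3)})$ or $(1-q^{2n_3})/(1-q^{2(n_1+n_3)})$, times a power of $q$.
\end{itemize}
The key simplification is that
\[
\frac{1-q^{n_1+n_3}}{1-q^{2(n_1+n_3)}}=\frac{1}{1+q^{n_1+n_3}}.
\]
Each identity thus becomes a finite polynomial identity in $q^{n_1},q^{n_2},q^{n_3}$, which I would check directly. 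Equivalently, it suffices to use the $q^2$-Pascal rule
\[
\binom{r}{s}_{q^2}=\binom{r-1}{s-1}_{q^2}+q^{2s}\binom{r-1}{s}_{q^2}
\]
and its dual, together with $(1-q^{n_2})/(q)_{n_2}=1/(q)_{n_2-1}$.

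I would verify \eqref{E:lvl1 r2} first, since it is the simpler relation, and then \eqref{E:lvl1 r1}. The latter needs the extra term $qz_3\cdot z_1q\cdot B^+$ and relates $A$, $B$, $C^+$ and $B^+$.

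The main obstacle is this coefficient identity for \eqref{E:lvl1 r1}. The two-part structure of $C$ produces several terms, and the $q^2$-binomial must split in exactly the right way, using both Pascal rules, so that the $z_1$ and $z_3$ contributions recombine with the $(1+q^{n_1+n_3})$ factor. If the direct check becomes messy, my fallback is to first derive from \eqref{E:lvl1 r2} an expression for $C-B$ in terms of $C^+$ and $B^+$, and then substitute it into \eqref{E:lvl1 r1}. This isolates a relation between $A$, $B$ and $B^+$ alone, namely $A-B$ expressed through $C^+$ and $B^+$, and reduces the number of distinct shifts appearing at any one time.

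Once both relations hold for the candidates, Theorem~\ref{T:uniqueness} identifies the candidates with the characters, which proves \eqref{E:level-one-chi}.
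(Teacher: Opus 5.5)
Your proposal is correct. For \eqref{E:lvl1 r3} and \eqref{E:lvl1 r1} it is the paper's argument. You reduce to Theorem~\ref{T:uniqueness}, extract the coefficient of $z_1^{n_1}z_2^{n_2}z_3^{n_3}$, divide by $f_{n_1,n_2,n_3}$, and use the one-step ratios. The simplification $(1-q^{n_1+n_3})/(1-q^{2(n_1+n_3)})=1/(1+q^{n_1+n_3})$ then appears exactly as in \eqref{E:gb1}--\eqref{E:gb3}.

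Where you differ is \eqref{E:lvl1 r2}. The paper first proves the expansion \eqref{E:level-one-chi-sum} of $\chi_{0,0,1}$ over powers of $z_2$, using \eqref{E:f-ratio} for all $m$ together with Euler's identity, and then splits off the $m=0$ summand. That expansion is just \eqref{E:lvl1 r2} iterated. It shows where the formula for $\chi_{0,0,1}$ comes from, but it is not needed for the verification. Your direct comparison is shorter, because the coefficient identity separates into
\[
q^{n_1+n_3}(1-q^{n_2})f_{n_1,n_2,n_3}=q^{2n_1+n_2+2n_3}f_{n_1,n_2-1,n_3}
\]
and
\[
q^{2n_1+n_2+2n_3-1}(1-q^{n_2})f_{n_1,n_2,n_3-1}=q^{3n_1+2n_2+3n_3-2}f_{n_1,n_2-1,n_3-1}.
\]
Both are the case $m=1$ of \eqref{E:f-ratio}, applied at $(n_1,n_2,n_3)$ and at $(n_1,n_2,n_3-1)$. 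No Gaussian binomial identity and no Euler identity are required.

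Two small corrections to your description of \eqref{E:lvl1 r1}. First, not every term has just one coordinate lowered. The terms $f_{n_1,n_2-1,n_3-1}$ and $f_{n_1-1,n_2,n_3-1}$ also occur, coming from the second summand of your $C$ and from $q^2z_1z_3B^+$. Composing one-step ratios handles them. Second, the two $f_{n_1-1,n_2,n_3-1}$ terms, from $qz_1C^+$ and from $q^2z_1z_3B^+$, each carry an extra factor $1/(1+q^{n_1+n_3-1})$. This factor cancels only after the two terms are added, which is precisely the content of \eqref{E:gb3}.

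With that grouping, the right side divided by $f_{n_1,n_2,n_3}$ becomes $q^{n_1+n_3}(1-q^{n_2})+(1-q^{2n_1+2n_3})/(1+q^{n_1+n_3})=1-q^{n_1+n_2+n_3}$. This matches the left side, so your fallback via \eqref{E:lvl1 r2} is unnecessary.
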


\begin{proof}
    By Theorem~\ref{T:uniqueness}, it is enough to show that the three series in
    \eqref{E:level-one-chi} satisfy \eqref{E:lvl1 r1}--\eqref{E:lvl1 r3} and have
    constant term $1$. The first two series have constant term $1$. For the third
    series, the first summand has constant term $1$, while the second summand is
    divisible by $q$. Relation \eqref{E:lvl1 r3} follows directly from
    \eqref{E:level-one-chi}.

    Let $f_{n_1,n_2,n_3}$ denote the coefficient of $z_1^{n_1}z_2^{n_2}z_3^{n_3}$
    in $F(\mathbf z,q)$ for $n_1,n_2,n_3\geq0$. We set $f_{n_1,n_2,n_3}=0$ whenever
    $n_1<0$, $n_2<0$, or $n_3<0$. Thus
    \[
        f_{n_1,n_2,n_3}
        =
        \frac{q^{n_1^2+n_3^2+\binom{n_2+1}{2}+n_1n_2+n_2n_3}}
        {(q)_{n_2}\,(q)_{n_1+n_3}}
        \binom{n_1+n_3}{n_1}_{q^2}.
    \]
    Comparing the powers of $q$ and the factors $(q)_{n_2}$ gives, for $0\leq m\leq
        n_2$,
    \begin{equation}\label{E:f-ratio}
        f_{n_1,n_2-m,n_3}
        =
        q^{\binom{m}{2}-m(n_1+n_2+n_3)}
        \frac{(q)_{n_2}}{(q)_{n_2-m}}\,
        f_{n_1,n_2,n_3}.
    \end{equation}

    \emph{Relation \eqref{E:lvl1 r2}.} First we prove
    \begin{equation}\label{E:level-one-chi-sum}
        \chi_{0,0,1}
        =
        \sum_{m\geq0}
        q^{\binom{m+1}{2}}z_2^m
        \left(
        F(\mathbf zq^{m+1},q)
        +q^{m+1}z_3\,F(\mathbf zq^{m+2},q)
        \right).
    \end{equation}
    The contribution of the terms containing $F(\mathbf zq^{m+1},q)$ to
    the coefficient of $z_1^{n_1}z_2^{n_2}z_3^{n_3}$ on the right side of
    \eqref{E:level-one-chi-sum} equals
    \begin{align*}
         &
        \sum_{m=0}^{n_2}
        q^{\binom{m+1}{2}+(m+1)(n_1+n_2-m+n_3)}
        f_{n_1,n_2-m,n_3}
        \\
         & =
        \sum_{m=0}^{n_2}
        q^{n_1+n_2+n_3-m}
        \frac{(q)_{n_2}}{(q)_{n_2-m}}\,
        f_{n_1,n_2,n_3}
        \\
         & =
        q^{n_1+n_3}\,(q)_{n_2}\,f_{n_1,n_2,n_3}
        \sum_{m=0}^{n_2}
        \frac{q^{n_2-m}}{(q)_{n_2-m}}
        \\
         & =
        q^{n_1+n_3}\,f_{n_1,n_2,n_3}.
    \end{align*}
    The second equality follows from \eqref{E:f-ratio}, and the last one
    from Euler's identity
    \[
        \sum_{m=0}^{n}\frac{q^m}{(q)_m}
        =
        \frac{1}{(q)_n}.
    \]
    This equals the coefficient of $z_1^{n_1}z_2^{n_2}z_3^{n_3}$ in
    \[
        F\bigl((z_1q,\,z_2,\,z_3q),\,q\bigr).
    \]

    For $n_3>0$, the contribution of the terms containing $q^{m+1}z_3F(\mathbf
        zq^{m+2},q)$ to the coefficient of $z_1^{n_1}z_2^{n_2}z_3^{n_3}$ on the right
    side of \eqref{E:level-one-chi-sum} equals
    \begin{align*}
         &
        \sum_{m=0}^{n_2}
        q^{
                \binom{m+1}{2}
                +m+1
                +(m+2)(n_1+n_2-m+n_3-1)
            }
        f_{n_1,n_2-m,n_3-1}
        \\
         & =
        \sum_{m=0}^{n_2}
        q^{2n_1+2n_2+2n_3-1-m}
        \frac{(q)_{n_2}}{(q)_{n_2-m}}\,
        f_{n_1,n_2,n_3-1}
        \\
         & =
        q^{2n_1+n_2+2n_3-1}
        (q)_{n_2}\,f_{n_1,n_2,n_3-1}
        \sum_{m=0}^{n_2}
        \frac{q^{n_2-m}}{(q)_{n_2-m}}
        \\
         & =
        q^{2n_1+n_2+2n_3-1}
        f_{n_1,n_2,n_3-1}.
    \end{align*}
    For $n_3=0$ the contribution is zero. Therefore the two contributions
    give the coefficients of the two summands in
    \[
        F\bigl((z_1q,\,z_2,\,z_3q),\,q\bigr)
        +
        qz_3\,F\bigl((z_1q^2,\,z_2q,\,z_3q^2),\,q\bigr),
    \]
    and \eqref{E:level-one-chi-sum} follows from \eqref{E:level-one-chi}.

    Substituting $\mathbf z\mapsto\mathbf zq$ in \eqref{E:level-one-chi-sum} and
    multiplying by $qz_2$ gives
    \begin{align*}
        qz_2\,\chi^+_{0,0,1}
         & =
        \sum_{m\geq0}
        q^{\binom{m+1}{2}+m+1}
        z_2^{m+1}
        \left(
        F(\mathbf zq^{m+2},q)
        +q^{m+2}z_3\,F(\mathbf zq^{m+3},q)
        \right)
        \\
         & =
        \sum_{m\geq1}
        q^{\binom{m+1}{2}}
        z_2^m
        \left(
        F(\mathbf zq^{m+1},q)
        +q^{m+1}z_3\,F(\mathbf zq^{m+2},q)
        \right).
    \end{align*}
    This is the part of \eqref{E:level-one-chi-sum} with $m\geq1$. The
    summand with $m=0$ is
    \[
        F(\mathbf zq,q)+qz_3\,F(\mathbf zq^2,q)
        =
        \chi_{0,1,0}+qz_3\,\chi^+_{0,1,0}.
    \]
    Together these two parts give
    \[
        \chi_{0,0,1}
        =
        \chi_{0,1,0}
        +qz_3\,\chi^+_{0,1,0}
        +qz_2\,\chi^+_{0,0,1},
    \]
    which is \eqref{E:lvl1 r2}.

    \emph{Relation \eqref{E:lvl1 r1}.} The coefficient of
    $z_1^{n_1}z_2^{n_2}z_3^{n_3}$ in $\chi_{1,0,0}-\chi_{0,1,0}$ is
    \[
        (1-q^{n_1+n_2+n_3})\,f_{n_1,n_2,n_3}.
    \]
    The right side of \eqref{E:lvl1 r1} is
    \[
        qz_2\,\chi^+_{0,0,1}
        +qz_1\,\chi^+_{0,0,1}
        +qz_3\,\chi^+_{0,1,0}
        +q^2z_1z_3\,\chi^+_{0,1,0}.
    \]
    Using \eqref{E:level-one-chi}, the coefficients of
    $z_1^{n_1}z_2^{n_2}z_3^{n_3}$ in these four terms are
    \begin{align}
        qz_2\,\chi^+_{0,0,1}
         & :\
        q^{2n_1+n_2+2n_3}\,f_{n_1,n_2-1,n_3}
        \notag
        \\
         & \quad
        +q^{3n_1+2n_2+3n_3-2}\,
        f_{n_1,n_2-1,n_3-1},
        \label{E:lvl1 coeff z2}
        \\[2pt]
        qz_1\,\chi^+_{0,0,1}
         & :\
        q^{2n_1+n_2+2n_3-1}\,f_{n_1-1,n_2,n_3}
        \notag
        \\
         & \quad
        +q^{3n_1+2n_2+3n_3-3}\,
        f_{n_1-1,n_2,n_3-1},
        \label{E:lvl1 coeff z1}
        \\[2pt]
        qz_3\,\chi^+_{0,1,0}
         & :\
        q^{2n_1+2n_2+2n_3-1}\,
        f_{n_1,n_2,n_3-1},
        \label{E:lvl1 coeff z3}
        \\[2pt]
        q^2z_1z_3\,\chi^+_{0,1,0}
         & :\
        q^{2n_1+2n_2+2n_3-2}\,
        f_{n_1-1,n_2,n_3-1}.
        \label{E:lvl1 coeff z1z3}
    \end{align}

    The definition of the Gaussian binomial coefficients gives the following
    identities. For $n_1>0$,
    \begin{equation}\label{E:gb1}
        \frac{(q)_{n_1+n_3}}{(q)_{n_1+n_3-1}}\,
        \frac{\binom{n_1+n_3-1}{n_1-1}_{q^2}}
        {\binom{n_1+n_3}{n_1}_{q^2}}
        =
        \frac{1-q^{2n_1}}{1+q^{n_1+n_3}}.
    \end{equation}
    For $n_3>0$,
    \begin{equation}\label{E:gb2}
        \frac{(q)_{n_1+n_3}}{(q)_{n_1+n_3-1}}\,
        \frac{\binom{n_1+n_3-1}{n_1}_{q^2}}
        {\binom{n_1+n_3}{n_1}_{q^2}}
        =
        \frac{1-q^{2n_3}}{1+q^{n_1+n_3}}.
    \end{equation}
    For $n_1,n_3>0$,
    \begin{equation}\label{E:gb3}
        (1+q^{n_1+n_3-1})\,
        \frac{(q)_{n_1+n_3}}{(q)_{n_1+n_3-2}}\,
        \frac{\binom{n_1+n_3-2}{n_1-1}_{q^2}}
        {\binom{n_1+n_3}{n_1}_{q^2}}
        =
        \frac{(1-q^{2n_1})(1-q^{2n_3})}
        {1+q^{n_1+n_3}}.
    \end{equation}

    Dividing the sum of \eqref{E:lvl1 coeff z2}--\eqref{E:lvl1 coeff z1z3} by
    $f_{n_1,n_2,n_3}$ and using \eqref{E:f-ratio}, \eqref{E:gb1}, \eqref{E:gb2} and
    \eqref{E:gb3} gives
    \begin{align*}
         &
        q^{n_1+n_3}(1-q^{n_2})
        \\
         & \quad
        +\frac{
        q^{2n_3}(1-q^{2n_1})
        +q^{2n_1}(1-q^{n_2})(1-q^{2n_3})
        }{
        1+q^{n_1+n_3}
        }
        \\
         & \quad
        +\frac{
        q^{2n_1+n_2}(1-q^{2n_3})
        +(1-q^{2n_1})(1-q^{2n_3})
        }{
        1+q^{n_1+n_3}
        }.
    \end{align*}
    The numerator of the two fractions equals
    \begin{align*}
         &
        q^{2n_3}(1-q^{2n_1})
        +q^{2n_1}(1-q^{n_2})(1-q^{2n_3})
        \\
         & \quad
        +q^{2n_1+n_2}(1-q^{2n_3})
        +(1-q^{2n_1})(1-q^{2n_3})
        \\
         & =
        q^{2n_3}(1-q^{2n_1})
        +q^{2n_1}(1-q^{2n_3})
        +(1-q^{2n_1})(1-q^{2n_3})
        \\
         & =
        1-q^{2n_1+2n_3}.
    \end{align*}
    The total contribution divided by $f_{n_1,n_2,n_3}$ is therefore
    \begin{align*}
         &
        q^{n_1+n_3}(1-q^{n_2})
        +
        \frac{1-q^{2n_1+2n_3}}{1+q^{n_1+n_3}}
        \\
         & =
        q^{n_1+n_3}(1-q^{n_2})
        +1-q^{n_1+n_3}
        \\
         & =
        1-q^{n_1+n_2+n_3}.
    \end{align*}
    Multiplying by $f_{n_1,n_2,n_3}$ gives the coefficient of
    $z_1^{n_1}z_2^{n_2}z_3^{n_3}$ in
    $\chi_{1,0,0}-\chi_{0,1,0}$. This proves \eqref{E:lvl1 r1}.
\end{proof}

\section{Connection to the Meurman--Primc basis of standard \texorpdfstring{$A_1^{(1)}$}{A1(1)}-modules}\label{S:meurman-primc}

For $K=(k,0,0)$, it was observed in \cite{P} that the admissible partitions for
the $B_2^{(1)}$ basis of $W(k\Lambda_0)$ satisfy the same inequalities as the
exponent triples for the Meurman--Primc monomial basis of the standard
$A_1^{(1)}$-module $L(k\Lambda_0)$. The parametrization for all standard
$A_1^{(1)}$-modules is recalled in \cite{PT}. The comparison decomposes
according to the value of $c_0$. Combined with the recurrences of
Section~\ref{S:recurrence}, this decomposition expresses the multigraded
Meurman--Primc character of every standard $A_1^{(1)}$-module in terms of the
characters $\chi_K$, which are determined by Theorems~\ref{T:main}
and~\ref{T:uniqueness}. In this sense the $B_2^{(1)}$ recurrences at level $k$
also determine the corresponding $A_1^{(1)}$ characters.

We use the parametrization of the Meurman--Primc monomial basis recalled in
\cite{PT}. Fix $\bar{k}_0,\bar{k}_1\in\mathbb{Z}_{\geq 0}$ with
$k=\bar{k}_0+\bar{k}_1$, and let
$\widetilde{\mathcal{B}}_{(\bar{k}_0,\bar{k}_1)}$ denote the set of exponent
triples parametrizing this basis for the standard $A_1^{(1)}$-module of highest
weight $\bar{k}_0\Lambda_0+\bar{k}_1\Lambda_1$. It consists of triples
$(a_j,b_j,c_j)_{j\geq 0}$ of nonnegative integers satisfying
\[
    a_0=b_0=0,\qquad c_0\leq\bar{k}_1,\qquad a_1\leq\bar{k}_0,
\]
together with the difference conditions
\begin{equation}\label{E:MP difference conditions}
    \begin{aligned}
        a_j+b_j+a_{j+1}     & \leq k, \\
        c_j+b_j+a_{j+1}     & \leq k, \\
        c_j+b_{j+1}+a_{j+1} & \leq k, \\
        c_j+b_{j+1}+c_{j+1} & \leq k
    \end{aligned}
\end{equation}
for all $j\geq 0$.

We identify the three exponent sequences with the colors $2$, $0$,
$\underline{2}$ as follows:
\[
    a_j\leftrightarrow 2,\qquad
    b_j\leftrightarrow 0,\qquad
    c_j\leftrightarrow \underline{2}.
\]
After reordering the summands in each line, the inequalities \eqref{E:MP
    difference conditions} at $j\geq 1$ are exactly \eqref{DC}. For fixed
$c_0=m\in\{0,\ldots,\bar{k}_1\}$, the inequalities at $j=0$ become the initial
conditions for the $B_2^{(1)}$ weight $(\bar{k}_0,m,\bar{k}_1-m)$.

\begin{proposition}\label{P:c0 decomposition}
    For each $m\in\{0,\ldots,\bar{k}_1\}$, the map
    \[
        (a_j,b_j,c_j)_{j\geq 0}\longmapsto(a_j,b_j,c_j)_{j\geq 1}
    \]
    gives a bijection from the subset of
    $\widetilde{\mathcal{B}}_{(\bar{k}_0,\bar{k}_1)}$ consisting of exponent
    triples with $c_0=m$ onto $\Pi_{(\bar{k}_0,\,m,\,\bar{k}_1-m)}$.
\end{proposition}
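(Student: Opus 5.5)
The plan is to compare the two sets of inequalities directly, in the same way as the deletion bijection in Lemma~\ref{L:deletion bijection}. The shift map simply forgets the index $j=0$. Since $a_0=b_0=0$ and $c_0=m$ are fixed on the subset in question, the map is injective. It remains to show two things: the image lies in $\Pi_{(\bar k_0,m,\bar k_1-m)}$, and every element of that set lifts.

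First I will treat the inequalities at $j\geq 1$. Reorder the summands of \eqref{E:MP difference conditions}. The first line becomes $a_{j+1}+b_j+a_j\leq k$ and the second becomes $a_{j+1}+c_j+b_j\leq k$. The third becomes $b_{j+1}+a_{j+1}+c_j\leq k$ and the fourth becomes $c_{j+1}+b_{j+1}+c_j\leq k$. These are exactly the four lines of \eqref{DC}, with the same index $j\geq1$ and the same identification $a\leftrightarrow 2$, $b\leftrightarrow 0$, $c\leftrightarrow\underline 2$. So the condition \eqref{DC} for the image is equivalent to the Meurman--Primc difference conditions for $j\geq1$.

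Next I will treat the $j=0$ inequalities together with the initial data. Substituting $a_0=b_0=0$ and $c_0=m$ gives $a_1\leq k$, $a_1\leq k-m$, $b_1+a_1\leq k-m$ and $c_1+b_1\leq k-m$. Adding the extra condition $a_1\leq\bar k_0$, the system becomes
\[
a_1\leq\min(\bar k_0,k-m),\qquad b_1+a_1\leq k-m,\qquad c_1+b_1\leq k-m.
\]
Since $m\leq\bar k_1$, we have $k-m=\bar k_0+\bar k_1-m\geq\bar k_0$, so the minimum equals $\bar k_0$. For the weight $K=(\bar k_0,m,\bar k_1-m)$ we have $k_0+k_2=\bar k_0+\bar k_1-m=k-m$. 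The system is therefore exactly \hyperref[IC1]{IC1}--\hyperref[IC3]{IC3} for $K$. This weight has level $\bar k_0+m+\bar k_1-m=k$, so \eqref{DC} is the correct difference condition for it.

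Combining the two steps: a triple sequence with $c_0=m$ lies in $\widetilde{\mathcal B}_{(\bar k_0,\bar k_1)}$ if and only if its shifted sequence lies in $\Pi_{(\bar k_0,m,\bar k_1-m)}$. The constraint $c_0\leq\bar k_1$ holds automatically for $m$ in the stated range. Surjectivity then follows by prepending $(0,0,m)$ to any element of $\Pi_{(\bar k_0,m,\bar k_1-m)}$. Finite support is preserved in both directions.

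The only step that is more than bookkeeping is showing that the two bounds on $a_1$ collapse to $\bar k_0$, which uses $m\leq\bar k_1$. I do not expect any real obstacle beyond that.
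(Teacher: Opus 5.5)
Your proposal is correct and follows essentially the same route as the paper. You evaluate the $j=0$ Meurman--Primc inequalities at $a_0=b_0=0$, $c_0=m$, use $m\le\bar k_1$ so that $a_1\le\bar k_0$ absorbs $a_1\le k$ and $a_1\le k-m$, match the $j\ge 1$ conditions with \eqref{DC} by reordering, and prepend $(0,0,m)$ for surjectivity; your explicit remarks on injectivity and on $c_0\le\bar k_1$ holding automatically only spell out what the paper leaves implicit.
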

\begin{proof}
    Fix $c_0=m$. The inequalities \eqref{E:MP difference conditions} at $j=0$ with
    $a_0=b_0=0$ give
    \begin{align*}
        a_1     & \leq k,   \\
        a_1     & \leq k-m, \\
        b_1+a_1 & \leq k-m, \\
        c_1+b_1 & \leq k-m.
    \end{align*}
    Since $m\leq\bar{k}_1$ implies $\bar{k}_0\leq k-m$, both
    $a_1\leq k$ and $a_1\leq k-m$ follow from $a_1\leq\bar{k}_0$. The
    inequality $a_1\leq\bar{k}_0$, together with
    \[
        \begin{aligned}
            b_1+a_1 & \leq k-m, \\
            c_1+b_1 & \leq k-m
        \end{aligned}
    \]
    is exactly the set of initial conditions for the $B_2^{(1)}$ weight
    $(\bar{k}_0,m,\bar{k}_1-m)$, because
    \[
        k-m=\bar{k}_0+\bar{k}_1-m=\bar{k}_0+(\bar{k}_1-m).
    \]
    The four inequalities in \eqref{DC} at $j\geq 1$ are the four inequalities
    in \eqref{E:MP difference conditions} at $j\geq 1$, read in reverse order.
    Hence the map sends the subset with $c_0=m$ into
    $\Pi_{(\bar{k}_0,m,\bar{k}_1-m)}$.

    Conversely, given $(a'_j,b'_j,c'_j)_{j\geq
                1}\in\Pi_{(\bar{k}_0,m,\bar{k}_1-m)}$, define $(a_j,b_j,c_j)_{j\geq0}$ by
    \[
        a_0=b_0=0,\qquad c_0=m,\qquad
        (a_j,b_j,c_j)=(a'_j,b'_j,c'_j)\quad (j\geq1).
    \]
    Then the resulting triple satisfies \eqref{E:MP difference conditions} at every
    $j\geq0$ together with the Meurman--Primc initial conditions, so it lies in
    $\widetilde{\mathcal{B}}_{(\bar{k}_0,\bar{k}_1)}$ with $c_0=m$.
\end{proof}

For $\pi=(a_j,b_j,c_j)_{j\geq 0}\in
    \widetilde{\mathcal{B}}_{(\bar{k}_0,\bar{k}_1)}$, define
\[
    |\pi|_{2}=\sum_{j\geq 0}a_j,\qquad
    |\pi|_{0}=\sum_{j\geq 0}b_j,\qquad
    |\pi|_{\underline{2}}=\sum_{j\geq 0}c_j
\]
and
\[
    \deg(\pi)=\sum_{j\geq 0}j(a_j+b_j+c_j).
\]
Define now the multigraded formal character of the standard $A_1^{(1)}$-module
$L(\bar{k}_0\Lambda_0+\bar{k}_1\Lambda_1)$, with respect to the Meurman--Primc
monomial basis, by
\[
    \widetilde{\chi}_{(\bar{k}_0,\bar{k}_1)}(\mathbf{z},q)
    =
    \sum_{\pi\in\widetilde{\mathcal{B}}_{(\bar{k}_0,\bar{k}_1)}}
    z_1^{|\pi|_2}\,z_2^{|\pi|_0}\,
    z_3^{|\pi|_{\underline{2}}}\,q^{\deg(\pi)}.
\]

\begin{corollary}\label{C:character identity}
    The character $\widetilde{\chi}_{(\bar{k}_0,\bar{k}_1)}$ satisfies
    \begin{equation}\label{E:character identity}
        \widetilde{\chi}_{(\bar{k}_0,\bar{k}_1)}(\mathbf{z},q)
        = \sum_{m=0}^{\bar{k}_1} z_3^{m}\,\chi_{(\bar{k}_0,\,m,\,\bar{k}_1-m)}(\mathbf{z},q).
    \end{equation}
\end{corollary}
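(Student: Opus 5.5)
The plan is to split $\widetilde{\mathcal B}_{(\bar k_0,\bar k_1)}$ by the value $c_0=m\in\{0,\ldots,\bar k_1\}$ and apply Proposition~\ref{P:c0 decomposition} to each piece. The pieces are disjoint, and their union is all of $\widetilde{\mathcal B}_{(\bar k_0,\bar k_1)}$ because the initial condition forces $c_0\leq\bar k_1$. Hence the generating function splits as a finite sum over $m$ of the generating functions of the subsets with $c_0=m$. It then suffices to show that the generating function of the subset with $c_0=m$ equals $z_3^m\chi_{(\bar k_0,m,\bar k_1-m)}(\mathbf z,q)$.

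Fix $m$ and let $\pi=(a_j,b_j,c_j)_{j\geq0}$ have $c_0=m$. Its image under the bijection of Proposition~\ref{P:c0 decomposition} is $\pi'=(a_j,b_j,c_j)_{j\geq1}\in\Pi_{(\bar k_0,m,\bar k_1-m)}$. We compare the statistics of $\pi$ and $\pi'$ term by term.

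\begin{itemize}
\item Since $a_0=b_0=0$, we have $|\pi|_2=|\pi'|_2$ and $|\pi|_0=|\pi'|_0$.
\item For the color $\underline 2$, $|\pi|_{\underline2}=m+|\pi'|_{\underline2}$.
\item The $j=0$ term contributes $0\cdot(a_0+b_0+c_0)=0$ to $\deg(\pi)$. The entries with $j\geq1$ keep their index $j$ in $\pi'$ (no reindexing, unlike Lemma~\ref{L:deletion bijection}). So $\deg(\pi)=\deg(\pi')$, where $\deg(\pi')$ is computed as in the definition of $\chi_K$.
\end{itemize}

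It follows that the monomial of $\pi$ equals $z_3^m$ times the monomial of $\pi'$. Summing over the bijection gives $z_3^m\chi_{(\bar k_0,m,\bar k_1-m)}(\mathbf z,q)$, with no substitution $\mathbf z\mapsto\mathbf zq$. Summing over $m$ then yields \eqref{E:character identity}.

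There is no real obstacle. The only point needing care is the indexing convention. The Meurman--Primc triples start at $j=0$ with weight $0$, while the $B_2^{(1)}$ partitions start at $j=1$ with weight $1$. One has to check that the identification $j\leftrightarrow j$ preserves the degree, so that the extra factor is exactly $z_3^{c_0}$ and no power of $q$ appears. One should also note that the series converge formally: for each degree there are finitely many terms, and the sum over $m$ is finite.
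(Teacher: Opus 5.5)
Your proposal is correct and follows the paper's own proof: you split by $c_0=m$, apply the bijection of Proposition~\ref{P:c0 decomposition}, and observe that deleting the $j=0$ entries preserves $|\pi|_2$, $|\pi|_0$ and $\deg(\pi)$ while shifting $|\pi|_{\underline 2}$ by $m$. Summing over $m$ then gives \eqref{E:character identity}, exactly as in the paper.
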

\begin{proof}
    By Proposition~\ref{P:c0 decomposition}, the map sending
    $\pi=(a_j,b_j,c_j)_{j\geq 0}$ to $\pi'=(a_j,b_j,c_j)_{j\geq 1}$ is a bijection
    from the subset of $\widetilde{\mathcal{B}}_{(\bar{k}_0,\bar{k}_1)}$ with
    $c_0=m$ onto $\Pi_{(\bar{k}_0,m,\bar{k}_1-m)}$. Since $a_0=b_0=0$ and the terms
    with $j=0$ do not contribute to $\deg(\pi)$, deleting the $j=0$ entries
    preserves $|\pi|_2$, $|\pi|_0$, and $\deg(\pi)$. It changes only the
    $\underline{2}$-count:
    \[
        |\pi|_{\underline{2}}=c_0+|\pi'|_{\underline{2}}=m+|\pi'|_{\underline{2}}.
    \]
    Therefore the contribution of the triples with $c_0=m$ to
    $\widetilde{\chi}_{(\bar{k}_0,\bar{k}_1)}$ is
    \[
        z_3^{m}\,\chi_{(\bar{k}_0,m,\bar{k}_1-m)}(\mathbf{z},q).
    \]
    Summing over $m=0,\ldots,\bar{k}_1$ gives \eqref{E:character identity}.
\end{proof}

\begin{corollary}\label{C:level-one-MP}
    The multigraded Meurman--Primc characters of the two level $1$ standard
    $A_1^{(1)}$-modules are
    \begin{align*}
        \widetilde{\chi}_{(1,0)}(\mathbf z,q)
         & = F(\mathbf z,q),
        \\
        \widetilde{\chi}_{(0,1)}(\mathbf z,q)
         & = F\bigl((z_1q,\,z_2,\,z_3q),\,q\bigr)
        +qz_3F\bigl((z_1q^2,\,z_2q,\,z_3q^2),\,q\bigr)
        +z_3F(\mathbf zq,q),
    \end{align*}
    where $F$ is the series of Theorem~\ref{T:level-one-character-formulas}.
\end{corollary}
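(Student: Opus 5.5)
The plan is to specialize Corollary~\ref{C:character identity} to level $k=1$ and substitute the explicit level $1$ formulas of Theorem~\ref{T:level-one-character-formulas}. At level $1$ the pairs $(\bar k_0,\bar k_1)$ are $(1,0)$ and $(0,1)$, and in each case the sum in \eqref{E:character identity} runs over $m=0,\ldots,\bar k_1$. So we only have to identify which $B_2^{(1)}$ weights appear and then read off their characters.

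For $(\bar k_0,\bar k_1)=(1,0)$, only $m=0$ occurs. Corollary~\ref{C:character identity} then gives
\[
\widetilde\chi_{(1,0)}(\mathbf z,q)=\chi_{(1,0,0)}(\mathbf z,q).
\]
By \eqref{E:level-one-chi} this equals $F(\mathbf z,q)$.

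For $(\bar k_0,\bar k_1)=(0,1)$, both $m=0$ and $m=1$ occur, with weights $(0,0,1)$ and $(0,1,0)$ respectively. Hence
\[
\widetilde\chi_{(0,1)}(\mathbf z,q)=\chi_{(0,0,1)}(\mathbf z,q)+z_3\,\chi_{(0,1,0)}(\mathbf z,q).
\]
Substituting from \eqref{E:level-one-chi} gives $F\bigl((z_1q,z_2,z_3q),q\bigr)+qz_3F\bigl((z_1q^2,z_2q,z_3q^2),q\bigr)$ for the first term. For the second term, $\chi_{0,1,0}=F(\mathbf zq,q)$, so it contributes $z_3F(\mathbf zq,q)$. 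Adding these yields the stated formula.

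There is no real obstacle here. The only point requiring care is bookkeeping: we must match the weight $(\bar k_0,m,\bar k_1-m)$ to the correct $\chi$, and keep the factor $z_3^m$ on the right term (it multiplies the $m=1$ term $\chi_{(0,1,0)}$, not $\chi_{(0,0,1)}$).
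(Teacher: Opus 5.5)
Your proposal is correct and is exactly the paper's argument: specialize Corollary~\ref{C:character identity} at level $1$ to get $\widetilde{\chi}_{(1,0)}=\chi_{(1,0,0)}$ and $\widetilde{\chi}_{(0,1)}=\chi_{(0,0,1)}+z_3\chi_{(0,1,0)}$, then substitute the formulas of Theorem~\ref{T:level-one-character-formulas}. Your matching of the weights and of the factor $z_3^m$ is also right.
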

\begin{proof}
    Specialize Corollary~\ref{C:character identity} to $k=1$:
    $\widetilde{\chi}_{(1,0)}=\chi_{(1,0,0)}$ and
    $\widetilde{\chi}_{(0,1)}=\chi_{(0,0,1)}+z_3\chi_{(0,1,0)}$. Substituting the
    formulas of Theorem~\ref{T:level-one-character-formulas} gives the result.
\end{proof}

\end{document}